\titlespacing\section{0pt}{12pt plus 4pt minus 2pt}{0pt plus 2pt minus 2pt}
\titlespacing\subsection{0pt}{12pt plus 4pt minus 2pt}{0pt plus 2pt minus 2pt}
\titlespacing\subsubsection{0pt}{12pt plus 4pt minus 2pt}{0pt plus 2pt minus 2pt}
\patchcmd{\endabstract}{\null}{}{}{}
\patchcmd{\thebibliography}{\chapter*}{\section*}{}{}
\patchcmd{\@tocline}{\hfill}{%
  \leaders\hbox{$\m@th
    \mkern \@dotsep mu\hbox{.}\mkern \@dotsep
  mu$}\hfill}{}{}
\patchcmd{\l@section}{\hfil}{%
  \leaders\hbox{$\m@th
    \mkern \@dotsep mu\hbox{\color{blue}.}\mkern \@dotsep
  mu$}\hfill}{}{}
\theoremstyle{remark}
\newtheorem{theorem}{Theorem}
\newtheorem{remark}[theorem]{Remark}
\newtheorem{definition}[theorem]{Definition}
  \def\\{}%
  \def\texttt#1{<#1>}%
\renewenvironment{abstract}{%
      \@beginparpenalty\@lowpenalty
      \small
      \begin{center}%
        \bfseries \abstractname
        \@endparpenalty\@M
        \vspace*{-0.2in}
      \end{center}\quotation}%
     {\endquotation\par%
     }
\begin{document}


\begin{center}
{\bf \LARGE Self-graphing equations}\\

\medskip


\label{Alexander}

Samuel Allen Alexander
\end{center}

\vspace*{-0.2in}

\begin{abstract}
\noindent
Can you find an $xy$-equation that, when graphed, writes itself on the plane?
This idea became internet-famous when a Wikipedia article on Tupper’s self-referential formula went viral in 2012. Under scrutiny, the question has two flaws: it is meaningless (it depends on typography) and it is trivial (for reasons we will explain). We fix these flaws by formalizing the problem, and we give a very general solution using techniques from computability theory.
\end{abstract}

\section{Introduction}

Suppose your friend sends you an $xy$-equation and you start graphing it.
After graphing for a few minutes, you notice that what you've graphed so far
looks like the letter $x$. You continue graphing, and you notice that you've
just plotted the letter $y$ on your graphing paper.
After some more work, you notice that you've
just added the symbol $+$ in between the $x$ and the $y$.
You continue this way for many hours until the equation is completely graphed.
Then you step back and realize that what you've written on your graphing paper
is the very equation your friend sent you!

A self-graphing equation is an equation such that, when you graph it, you get
the equation itself back, written on your graphing paper. This idea received
a lot of attention when a Wikipedia article on Tupper's self-referential
formula went viral in 2012\footnote{Tupper's so-called self-referential formula
is not actually self-referential at all (nor did he himself call it
self-referential \cite{tupper2001reliable}). Rather, it's a formula whose graph
contains every possible $106\times 17$-pixel bitmap. Tupper later posted an
actually self-referential formula on his website \cite{tupperwebpage},
but it received less attention.
Tupper's original formula has been generalized by Somu
and Mishra \cite{somu2023generalization}.
Tr\'avn\'ik has also published
a self-graphing formula \cite{travnikwebpage}.}.

The problem of finding a self-graphing equation is meaningless because
it depends on typography. It's also trivial, if no limit is placed on
what functions one can use in the equation and how they are written.
Indeed, fix a particular image $I\subseteq\mathbb R^2$ of the equation
``$F(x,y)=1$'' on the plane (for example, the letter $F$ could be the union
of a vertical line segment and two horizontal line segments; the
parentheses could be fragments of B\'ezier curves; and so on),
and define $F:\mathbb R\to\{0,1\}$ by
\[
  F(x,y)
  =
  \begin{cases}
    1 &\mbox{if $(x,y)\in I$,}\\
    0 &\mbox{otherwise.}
  \end{cases}
\]
By construction the graph of the equation $F(x,y)=1$ is $I$, an image
of the equation $F(x,y)=1$ on the plane. So $F(x,y)=1$ is trivially a
self-graphing equation. Clearly, in order to make the problem nontrivial,
it is necessary to specify which functions are allowed, and how they are
written!

We will address the two problems of meaninglessness and triviality by
formalizing the problem. Then, rather than focusing on any particular
typography or any particular choice of what functions are allowed,
we will instead give sufficient conditions thereon. Any typography, and
any choice of functions, which satisfies these sufficient conditions,
will be guaranteed to yield a self-graphing equation. To do this, we will
invoke the so-called \emph{recursion theorem} from computability theory
(appropriately, the same theorem which was classically used to prove the
existence of self-printing computer programs, also known as \emph{Quines}).

\section{Formalization}

\begin{quote}
  ``What was a compelling proof in 1810 may well not be now;
  what is a fine closed form in 2010 may have been anathema a century
  ago'' \cite{borwein2013closed}
\end{quote}

\begin{definition}
  If $A\not=\emptyset$ is a finite
  alphabet, write $A^*$ for
  the set of finite strings
  from $A$. By a \emph{notion
  of equations} we mean a finite
  alphabet $A$ together with a
  function
  $\mathrm{Gr}:A^*
  \to\mathscr P(\mathbb R^2)$
  assigning to every string
  $\sigma\in A^*$ a subset
  $\mathrm{Gr}(\sigma)$ of $\mathbb R^2$
  called the \emph{graph} of $\sigma$.
\end{definition}

For example, if $A$ contains symbols $x$, $y$, $+$, ${}^2$, $=$ and $1$, and
if $\sigma\in A^*$ is the string ``$x^2+y^2=1$'', then $\mathrm{Gr}(\sigma)$
might be (but we do not require it to be!) the unit circle centered at the
origin. Or, if $A$ contains symbols $r$, $\theta$, $\cos$,
$=$ and $1$, and if $\sigma$ is the string ``$r=1+\cos\theta$'', then
$\mathrm{Gr}(\sigma)$ might be the graph
of a cardioid. Or if $\sigma$ is the string ``$+=$''
(or if $\sigma$ is the blank string), then $\mathrm{Gr}(\sigma)$
might be an error message, ``Error: Invalid
equation'', written on the plane (as, say, a union of points, line segments,
and B\'ezier curve fragments).

\begin{definition}
  By a \emph{glyphed notion of equations} we mean a triple
  $(A,\mathrm{Gr},\mathrm{Gl})$ where $(A,\mathrm{Gr})$ is a notion
  of equations and $\mathrm{Gl}:A\to\mathscr{P}(\mathbb R^2)$
  is a function assigning to each $x\in A$ a set
  $\mathrm{Gl}(x)\subseteq\mathbb R^2$ called the \emph{glyph} of $x$.
\end{definition}

If $A$ contains the symbol $0$, then $\mathrm{Gl}(0)$ might be,
for example, the circle of radius $\frac12$ centered at
$(\frac12,\frac12)$ (so as to nicely fit in the $1\times 1$ unit
square $[0,1]^2$, lending itself to a monospace font where each character
is $1$ unit wide). But it does not have to be. If $A$ contains the symbol $X$,
then $\mathrm{Gl}(X)$ might be, for example, the union of the line segment from
$(0,0)$ to $(1,1)$ and the line segment from $(0,1)$ to $(1,0)$ (again nicely
lending itself to a monospace font where each character is $1$ unit wide). But
it does not have to be.

\begin{definition}
\label{extendingglyphsdefn}
  (Extending glyphs to strings)
  \begin{enumerate}
    \item
    For all $S\subseteq\mathbb R^2$ and all $r\in\mathbb R$,
    let $S^{\rightarrow r}=\{(x+r,y)\,:\,(x,y)\in S\}$, the result
    of translating $S$ to the right by $r$ units.
    \item
    Whenever $(A,\mathrm{Gr},\mathrm{Gl})$ is a glyphed notion of equations,
    we will extend $\mathrm{Gl}$ to a function on $A^*$,
    also written $\mathrm{Gl}$ (this will cause no confusion),
    as follows. Let $\sigma\in A^*$.
    \begin{itemize}
      \item If $\sigma$ is the empty string, let $\mathrm{Gl}(\sigma)=\emptyset$.
      \item If $\sigma$ is the string of length $1$, whose first (and only)
        character is $x\in A$, let $\mathrm{Gl}(\sigma)=\mathrm{Gl}(x)$.
      \item Otherwise, $\sigma$ is the string $x_0\ldots x_k$ where each
        $x_i\in A$. Let
        \[
          \mathrm{Gl}(\sigma)
          =
          \mathrm{Gl}(x_0)^{\rightarrow 0}
          \cup
          \cdots
          \cup
          \mathrm{Gl}(x_k)^{\rightarrow k}.
        \]
    \end{itemize}
  \end{enumerate}
\end{definition}

Thus, $\mathrm{Gl}(\sigma)$ is the result of writing $\sigma$ on the plane,
from left to right, translating the glyph of each $i$th character to the right
by $i$ units. The resulting union is particularly easy to visualize if we
assume that for every $x\in A$, $\mathrm{Gl}(x)\subseteq [0,1]^2$. In that case,
the glyphs of $A$ comprise a monospace font where every character has width $1$,
and the glyph of a string in $A^*$ is the result of writing the glyphs of the
individual characters from left to right in the usual way. This assumption will
make the results in this paper more intuitive, but, interestingly, the whole
paper will work just fine without this assumption.

\begin{definition}
  (Self-graphing equations)
  Let $\mathcal A=(A,\mathrm{Gr},\mathrm{Gl})$ be a glyphed notion of equations.
  By a \emph{self-graphing equation in $\mathcal A$} we mean a string
  $\sigma\in A^*$ such that $\mathrm{Gr}(\sigma)=\mathrm{Gl}(\sigma)$.
\end{definition}

\section{Computability theory preliminaries}

\begin{definition}
  \begin{enumerate}
    \item
    For any sets $X$ and $Y$, we write $f:{\subseteq}X\to Y$ to indicate
    that $f$ is a function whose codomain is $Y$ and whose domain is
    some subset of $X$.
    \item
    For all $n\in\mathbb N$,
    let $\varphi_n:{\subseteq}\mathbb N\to \mathbb N$
    be the $n$th computable function
    (assuming some fixed enumeration, possibly with repetition,
    of the computable functions).
    \item
    A function $f:{\subseteq}\mathbb N\to\mathbb N$ is \emph{total computable}
    if $\mathrm{dom}(f)$ (the domain of $f$) is all of $\mathbb N$.
  \end{enumerate}
\end{definition}

We state the following celebrated result from computability theory
without proof.

\begin{theorem}
\label{recursionthm}
  (The Recursion Theorem)
  For every total computable $f:\mathbb N\to\mathbb N$, there is some
  $n\in\mathbb N$ such that $\varphi_n=\varphi_{f(n)}$.
\end{theorem}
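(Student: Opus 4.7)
The plan is to prove this by Kleene's classical construction, combining the $s$-$m$-$n$ theorem with a diagonal trick. The goal is to find an index $n$ that, via a suitable indirection, happens to compute the same partial function as $\varphi_{f(n)}$.

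First I would invoke the $s$-$m$-$n$ theorem to obtain a total computable function $s:\mathbb{N}\to\mathbb{N}$ such that
\[
  \varphi_{s(x)}(y) \;=\; \varphi_{\varphi_x(x)}(y)
\]
for all $x,y\in\mathbb{N}$, where equality of partial functions means both sides diverge or both converge to the same value. This $s$ exists because the map $(x,y)\mapsto\varphi_{\varphi_x(x)}(y)$ is itself partial computable: on input $(x,y)$, first simulate $\varphi_x(x)$, and if it converges to some $z$, then simulate $\varphi_z(y)$. The $s$-$m$-$n$ theorem then absorbs the $x$-argument into the index.

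Next I would observe that $f\circ s$ is total computable, being the composition of two total computable functions. Hence there exists $e\in\mathbb{N}$ with $\varphi_e = f\circ s$ as total functions; in particular $\varphi_e(e) = f(s(e))$. Setting $n = s(e)$, I would then compute
\[
  \varphi_n \;=\; \varphi_{s(e)} \;=\; \varphi_{\varphi_e(e)} \;=\; \varphi_{f(s(e))} \;=\; \varphi_{f(n)},
\]
which is exactly the desired conclusion.

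The hard part is really the $s$-$m$-$n$ theorem, which the excerpt does not state explicitly but which is standard prerequisite machinery for the recursion theorem; granted that, the diagonal chase above is very short. The essential trick is the self-application $\varphi_x(x)$ inside the definition of $s$, which supplies the fixed-point behavior as soon as we feed in an index $e$ for $f\circ s$.
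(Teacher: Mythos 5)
The paper states the Recursion Theorem explicitly \emph{without proof}, citing it as a celebrated result from computability theory, so there is no in-paper argument to compare yours against. Your proposal is the standard Kleene fixed-point construction and it is correct: the $s$-$m$-$n$ theorem yields a total computable $s$ with $\varphi_{s(x)}=\varphi_{\varphi_x(x)}$ (the everywhere-undefined function when $\varphi_x(x)$ diverges), and feeding an index $e$ for the total function $f\circ s$ into $s$ produces the fixed point $n=s(e)$. The one step worth flagging is that the chain $\varphi_{s(e)}=\varphi_{\varphi_e(e)}=\varphi_{f(s(e))}$ is legitimate only because $\varphi_e(e)$ actually converges, which follows from the totality of $f\circ s$; you note this, so the argument is complete modulo the $s$-$m$-$n$ theorem, which the paper itself also invokes (in the Subclaim of the proof of Theorem \ref{maintheorem}) and which it is reasonable to take as given here.
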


\section{Self-constraint: a sufficient condition for the existence of self-graphing
equations}

In this section we fix a glyphed notion of equations
$\mathcal A=(A,\mathrm{Gr},\mathrm{Gl})$ (where $A$ is a finite nonempty
alphabet).

\begin{definition}
  By a \emph{G\"odel numbering of $A^*$} we mean a bijection\footnote{One could
  change this definition to require only that $\ulcorner\bullet\urcorner$ be
  an injection instead of a bijection, which would be more typical of
  G\"odel numberings. We chose to require the G\"odel numbering function to
  be bijective in order to avoid technical complications.}
  $\ulcorner\bullet\urcorner:A^*\to\mathbb N$ such that
  there is some algorithm for computing $\ulcorner\sigma\urcorner$
  (for $\sigma\in A^*$)
  as a function of $\sigma$.
  We refer to $\ulcorner\sigma\urcorner$ as the \emph{G\"odel number}
  of $\sigma$ (we think of $\ulcorner\sigma\urcorner$ as a numerical
  encoding of $\sigma$).
\end{definition}

\begin{definition}
\label{selfconstraintdefn}
  The glyphed notion of equations $\mathcal A$ is \emph{self-constrained}
  if there exists a G\"odel numbering $\ulcorner\bullet\urcorner$
  of $A^*$ and a total computable $f:\mathbb N\to\mathbb N$ such that:
  \begin{itemize}
    \item
    For all $n\in\mathbb N$, if $\varphi_n(0)=\ulcorner\tau\urcorner$
    for some $\tau\in A^*$, then
    $f(n)=\ulcorner\sigma\urcorner$ for some $\sigma\in A^*$
    such that $\mathrm{Gr}(\sigma)=\mathrm{Gl}(\tau)$.
  \end{itemize}
\end{definition}

If $f$ is as in Definition \ref{selfconstraintdefn}, then $f$ should intuitively be
thought of as being computed by an algorithm which takes an input $n\in\mathbb N$
and outputs an equation whose graph is the output of
$\varphi_n(0)$ (if any), written on the plane. The strings in question are
encoded by G\"odel numbers to standardize the functions in question and allow
the usage of standard computability theory, but intuitively one should think of
$f$ and $\varphi_n$ as outputting strings from $A^*$.
If $0\not\in\mathrm{dom}(\varphi_n)$ then it does not matter what $f(n)$ is,
only that $f(n)$ be defined.

\begin{remark}
\label{importantremark}
It is not required, in the algorithm which
computes $f(n)$, for $\varphi_n(0)$ to actually be computed as a preliminary
step. It is not even required that the algorithm computing $f(n)$ determine
whether or not $\varphi_n(0)$ exists (and indeed, this would be impossible,
as it would require solving the Halting Problem). The work of computing
$\varphi_n(0)$, or even of determining whether $\varphi_n(0)$ exists, can
be delegated to whoever has to \emph{graph} the output of $f(n)$.
\end{remark}

We can illustrate Remark \ref{importantremark} with the
following analogy. Say that $k\in\mathbb N$ is an \emph{FLT-counterexample}
(here FLT stands for ``Fermat's Last Theorem'')
if $k>2$ and there exist positive integers $a,b,c$ such that $a^k+b^k=c^k$.
For every $x\in\mathbb R$, let $\psi(x)$ be the number of FLT-counterexamples
$\leq x$. A teacher does not need to know Fermat's Last Theorem in order to
assign a student the task of graphing the equation $y=\psi(x)$.
Without knowing Fermat's Last Theorem is true, a teacher can even, with
some tedious mechanical effort, rewrite $y=\psi(x)$ in ``closed form''
(at least if the closed form is allowed to include infinite sums---see
\cite{alexander2006formulas}). Knowledge of Fermat's Last Theorem is
required in order to \emph{graph} the equation, not to \emph{state} it.

We will now show that self-contraint is a sufficient condition for existence
of a self-graphing equation. At first glance, self-constraint might seem like
such a strong requirement as to leave one in doubt whether any reasonable
notions of equations actually satisfy it. We will give an example in
Section \ref{concretesection} of a notion of equations which is self-constrained
and therefore has a self-graphing equation, and the example should help the
reader to better understand how self-constraint can be satisfied. Basically,
the key is that infinite products or infinite sums can be used to encode
quantifiers $\exists$ and $\forall$.

\begin{theorem}
\label{maintheorem}
  If $\mathcal A$ is self-constrained then there exists a self-graphing
  equation in $\mathcal A$.
\end{theorem}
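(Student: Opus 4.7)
The plan is to apply the Recursion Theorem (Theorem \ref{recursionthm}) to an auxiliary computable function built from $f$, extracting a fixed point that ``knows its own graph.'' Let $\ulcorner\bullet\urcorner$ and $f$ witness self-constraint of $\mathcal A$. The first step is to construct a total computable $g:\mathbb N\to\mathbb N$ with the property that, for every $n$, $\varphi_{g(n)}$ is the constant function with value $f(n)$; in particular $\varphi_{g(n)}(0)=f(n)$. This is a standard application of the s-m-n theorem, or can be done by hand: from an input $n$, compute $f(n)$ (possible since $f$ is total computable), then output an index of an algorithm that ignores its input and returns $f(n)$. Totality of $g$ is immediate.

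Next, I would apply the Recursion Theorem to $g$ to obtain $n\in\mathbb N$ with $\varphi_n=\varphi_{g(n)}$. Evaluating both sides at $0$ gives $\varphi_n(0)=\varphi_{g(n)}(0)=f(n)$. Because $\ulcorner\bullet\urcorner$ is a \emph{bijection} $A^*\to\mathbb N$, the number $f(n)$ is automatically the Gödel number of a unique string, say $f(n)=\ulcorner\tau\urcorner$. Then $\varphi_n(0)=\ulcorner\tau\urcorner$, so the self-constraint hypothesis applies: there exists $\sigma\in A^*$ with $f(n)=\ulcorner\sigma\urcorner$ and $\mathrm{Gr}(\sigma)=\mathrm{Gl}(\tau)$. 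Injectivity of the Gödel numbering then forces $\sigma=\tau$ (both have Gödel number $f(n)$), so $\mathrm{Gr}(\sigma)=\mathrm{Gl}(\sigma)$ and $\sigma$ is the desired self-graphing equation.

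The main conceptual obstacle is choosing the \emph{right} auxiliary function to feed into the Recursion Theorem. Applying it to $f$ directly would only yield $\varphi_n=\varphi_{f(n)}$, which is not what we need; we need the equation $\varphi_n(0)=f(n)$, relating the \emph{output} of $\varphi_n$ to the \emph{index-level} action of $f$. The device of shifting $f$ into the index-generating position via $g$ is exactly the maneuver that classically produces Quines, and it is what converts the Recursion Theorem's statement about equality of computed functions into an equality of numbers that can be cashed out through the Gödel numbering. Everything else, including the use of bijectivity to upgrade ``some $\sigma$ encoding the right graph'' to ``$\sigma=\tau$,'' is then bookkeeping.
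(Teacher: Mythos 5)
Your proposal is correct and follows essentially the same route as the paper's own proof: the same auxiliary $g$ with $\varphi_{g(n)}(0)=f(n)$ (via s-m-n or a direct construction), the same application of the Recursion Theorem to $g$ rather than $f$, and the same use of injectivity of $\ulcorner\bullet\urcorner$ to conclude $\sigma=\tau$. No gaps.
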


\begin{proof}
  Let $\ulcorner\bullet\urcorner$ and $f:\mathbb N\to\mathbb N$ be as in
  Definition \ref{selfconstraintdefn}.

  Subclaim: There is a total computable function $g:\mathbb N\to\mathbb N$
  such that for all $n\in\mathbb N$, $\varphi_{g(n)}(0)=f(n)$.

  This Subclaim is actually a special case of a theorem
  from computability theory called
  the ``$Smn$ theorem'', but we will sketch a direct proof here.
  Let $g:\mathbb N\to\mathbb N$ be the function computed by the following
  algorithm:
  \begin{enumerate}
    \item Take input $n\in\mathbb N$.
    \item Let $X=f(n)$.
    \item Let $P$ be the following algorithm:
    \begin{enumerate}
      \item Take input $m\in\mathbb N$.
      \item Output $X$ (ignoring the value of $m$).
    \end{enumerate}
    \item Output an encoding of $P$ (a number $k$ such that
      $\varphi_k$ is the function computed by $P$).
  \end{enumerate}
  For any $n\in\mathbb N$, by construction $\varphi_{g(n)}$ is the
  function computed by the above algorithm $P$ (for the given $n$).
  Thus $\varphi_{g(n)}(0)$ is computed by ignoring the input $m=0$
  and outputting $X=f(n)$. Thus $\varphi_{g(n)}(0)=f(n)$.
  Since we have provided an algorithm for $g$, $g$ is computable.
  Clearly $\mathrm{dom}(g)=\mathbb N$, so $g$ is total computable.
  This proves the Subclaim.

  Let $g$ be as in the Subclaim.
  By the Recursion Theorem (Theorem \ref{recursionthm}) there is some
  $n\in\mathbb N$ such that $\varphi_n=\varphi_{g(n)}$.
  In particular,
  \[
    \varphi_n(0) = \varphi_{g(n)}(0) = f(n) \mbox{ is defined. ($*$)}
  \]
  Let $\sigma,\tau\in A^*$ be such that $f(n)=\ulcorner\sigma\urcorner$
  and $\varphi_n(0)=\ulcorner\tau\urcorner$.
  We claim $\sigma$ is a self-graphing equation in $\mathcal A$.
  To see this,
  compute:
  \begin{align*}
    \mathrm{Gr}(\sigma)
      &= \mathrm{Gl}(\tau)
        &\mbox{(Definition \ref{selfconstraintdefn})}\\
      &= \mathrm{Gl}(\sigma),
        &\mbox{(By $*$, $\sigma=\tau$)}
  \end{align*}
  as desired.
\end{proof}

\section{A Concrete Context for a Self-Graphing Equation}
\label{concretesection}

To conclude, we will give an example of a
particular glyphed notion of equations
$\mathcal A=(A,\mathrm{Gr},\mathrm{Gl})$ not too unlike how we
write and graph equations in practice. We will argue that this particular
$\mathcal A$ is
self-constrained. Thus, Theorem \ref{maintheorem} guarantees the
existence of a self-graphing equation in $\mathcal A$.

For an alphabet, let
\begin{align*}
  A={}&
    \{a,b,c,\ldots,z\}
      &\mbox{(Letters)}\\
  &{} \cup\{0,1,2,\ldots,9\}
      &\mbox{(Digits)}\\
  &{} \cup\{(\}\cup\{)\}
      &\mbox{(Left and right parentheses)}\\
  &{} \cup\{+,\cdot,-,/,{}^\wedge,=\}
      &\mbox{(Plus, times, minus, division, exponentiation, equality)}\\
  &{} \cup\{\Pi,\_,\infty\}
      &\mbox{(Infinite product machinery)}
\end{align*}
(for concreteness, $A$ can be taken to be a subset of $\mathbb N$ of
cardinality $26+10+2+6+3=47$).
The reader should think of ${}^\wedge$ as an exponentiation operator,
as in the equation $2{}^\wedge 3=8$ (read: ``$2$ to the power $3$ equals $8$'').
The character $\Pi$ should be thought of
as an infinite product symbol, to be used (in combination with ${}^\wedge$,
$\_$, $=$, $\infty$, and parentheses) as in the equation:
$\Pi\_(n=0){}^\wedge\infty(1^\wedge n)=1$ (read: ``The product, as $n$ goes from
$0$ to $\infty$, of $1^n$, equals $1$'').

Choose glyphs $\mathrm{Gl}:A\to\mathcal{P}(\mathbb R^2)$
for writing $A$ such that each such glyph is
written inside the square $[0,1]\times[0,1]$ using pixels of dimension
$\frac1{100}\times\frac1{100}$, each such pixel being a translation, by an integer
multiple of $\frac1{100}$ horizontally and an integer multiple of $\frac1{100}$
vertically, of the square $[0,\frac1{100}]\times[0,\frac1{100}]$.
For example, $\mathrm{Gl}(+)$, the glyph of the $+$ sign, might
be
$([\frac{50}{100},\frac{51}{100}]\times[0,1])
\cup
([0,1]\times[\frac{50}{100},\frac{51}{100}])$
(the first argument to $\cup$ being a rectangle of height $1$ and width $1/100$
and the second argument to $\cup$ being a rectangle of height $1/100$ and
width $1$), which can clearly be formed by such pixels.

Define $\mathrm{Gr}:A^*\to\mathscr P(\mathbb R^2)$ so that for every
$\sigma\in A^*$, if $\sigma$ is a valid equation, then $\mathrm{Gr}(\sigma)$
is the graph of $\sigma$. If $\sigma$ is not a valid equation, then let
$\mathrm{Gr}$ be some arbitrary nonempty subset of $\mathbb R^2$, for example,
an error message written on the plane (we only require it to be nonempty so
as not to inadvertently make the empty string a trivial self-graphing equation).
For example, if $\sigma$ is the string ``$x{}^\wedge2+y{}^\wedge2=1$'', then
$\mathrm{Gr}(\sigma)$ is the unit circle; if $\sigma$ is the string
``$x{}^\wedge 2=-1$'' then $\mathrm{Gr}(\sigma)$ is the empty set.

In this way, we obtain a glyphed notion of equations
$\mathcal A=(A,\mathrm{Gr},\mathrm{Gl})$. We will argue that $\mathcal A$
is self-constrained and thus (by Theorem \ref{maintheorem})
admits a self-graphing equation. In other words, we will argue
(Definition \ref{selfconstraintdefn}) that there is a G\"odel
numbering $\ulcorner\bullet\urcorner$ of $A^*$ and a total computable
$f:\mathbb N\to\mathbb N$ such that for all $n\in\mathbb N$, if
$\varphi_n(0)=\ulcorner\tau\urcorner$
then $f(n)=\ulcorner\sigma\urcorner$ for some $\sigma\in A^*$ such that
$\mathrm{Gr}(\sigma)=\mathrm{Gl}(\tau)$.

Let $\ulcorner\bullet\urcorner:A^*\to \mathbb N$ assign numbers bijectively
to strings from $A^*$ in some way that could be written out as an algorithm.
There are many ways to do this and it does not matter which way it is done.
As one example, we could linearly order $A$ and then enumerate $A^*$ by
listing all the length-$0$ strings in $A^*$ (in alphabetical order), followed
by all the length-$1$ strings in $A^*$ (in alphabetical order),
followed by all the length-$2$ strings in
$A^*$ (in alphabetical order) and so on,
and let each $\ulcorner \sigma\urcorner$ be the position
in which $\sigma$ occurs in the resulting list.

We want $f(n)$ to output $\ulcorner\sigma\urcorner$ for some $\sigma\in A^*$
such that the graph of $\sigma$ is $\mathrm{Gl}(\tau)$,
where $\tau\in A^*$ is the string
whose code is output by $\varphi_n(0)$ (if $0\in \mathrm{dom}(\varphi_n)$).
For such $\tau$, what does
it mean for a pair $(x,y)\in\mathbb R^2$ to be in $\mathrm{Gl}(\tau)$?
It means that...
\begin{equation}
  \tag{*}
  \exists a,b,c,d,e \in\mathbb N
  \mbox{ s.t.\ } P(n,a,b,c,d,e)
\end{equation}
...where $P(n,a,b,c,d,e)$ is the statement:
\begin{quote}
  The $n$th Turing machine (i.e., the Turing machine which
  computes $\varphi_n$), when run with input $0$, halts after exactly
  $a$ steps, with output $b$, and when $b$ is interpreted as
  a string $\tau$ (using $\ulcorner\bullet\urcorner$), $\tau$ has length at
  least $c+1$---let the $c$th symbol of $\tau$ (counting from $0$) be
  called $\tau_{c}$---and the $\frac1{100}\times\frac1{100}$ pixel
  with bottom-left coordinates $(d/100,e/100)$ is an element of
  $\mathrm{Gl}(\tau_{c})$, and $(x-c,y)$ (the result of translating $(x,y)$
  to the left by $c$ units) is in said pixel (so that $(x,y)$ is in the
  translation of said pixel by $c$ units to the right, which is said pixel's
  representation in $\mathrm{Gl}(\tau)$ by Definition \ref{extendingglyphsdefn}).
\end{quote}

Let's examine the subclauses of ($*$).
\begin{itemize}
  \item
  The subclause ``The $n$th Turing machine, when run
  on input $0$, halts after exactly $a$ steps, with output $b$'', can be
  expanded out into a complicated statement in the language of arithmetic
  (``There exists $k$ such that $k$ encodes a sequence $C_0,C_1,\ldots,C_a$
  of Turing machine snapshots such that...'').
  \item
  The subclause
  ``the $\frac1{100}\times\frac1{100}$ pixel
  with bottom-left coordinates $(d/100,e/100)$ is an element of
  $\mathrm{Gl}(\tau_{c})$'', can be written as a finite disjunction
  of quantifier-free statements about individual pixels, namely,
  at most $100\cdot 100\cdot |A|$ such disjuncts: one per
  $\frac1{100}\times\frac1{100}$ pixel in $[0,1]^2$ per
  symbol in $A$. For example, if the glyph of symbol ``$+$'' includes
  pixel $[50/100,51/100]\times [0,1/100]$, then this pixel-symbol pair
  contributes the
  quantifier-free disjunct:
  $(\tau_{c}=\mbox{``x''})\wedge (d=50)\wedge (e=0)$.
  \item
  The subclause ``$(x-c,y)$ is in said pixel'' can be
  rephrased as ``$d/100\leq x-c\leq (d+1)/100$ and
  $e/100\leq y\leq (e+1)/100$''.
\end{itemize}

We claim that all subclauses of ($*$) can be written as equations
of the form $E=0$ using only symbols from $A$; to see this, we reason
inductively:
\begin{itemize}
  \item
  Atomic subclauses like ``$d=50$'' can be written
  as $d-50=0$.
  \item
  Atomic subclauses like ``$e/100\leq y$'' can be rewritten as
  ``$y-e/100-|y-e/100|=0$'', and the absolute values can be
  replaced with symbols from $A$ by using the fact that
  $|x|=(x^2)^{1/2}$.
  \item
  (Disjunction)
  If two subclauses can be written in the form $E_1=0$ and $E_2=0$
  using only symbols from $A$, then so can their disjunction, because
  ``$E_1=0$ or $E_2=0$'' is equivalent to ``$(E_1)\cdot (E_2)=0$''.
  \item
  (Negation)
  If a subclause can be written in the form $E=0$ using only symbols
  from $A$, then so can its negation, because ``$\mbox{not}(E=0)$''
  is equivalent to $0^{E^2}=0$ (since $0^0=1$ \cite{knuth1992two}
  but $0^x=0$ for all positive
  $x$).
  \item
  (Existential Quantifiers)
  If a subclause $E=0$ can be written using only symbols from $A$
  (where $E$ may involve a variable $v$),
  then so can the clause $\exists v (E=0)$ for any variable $v$,
  because $\exists v (E=0)$ is equivalent to
  $\Pi_{v=0}^\infty (1-0^{E^2})=0$.
  \item
  (Conjunction, Universal Quantifiers)
  Closure under conjunction and universal quantification follow because
  ``$E_1=0$ and $E_2=0$'' is equivalent to
  ``$\mbox{not}(\mbox{not}(E_1=0)\mbox{ or }\mbox{not}(E_2=0))$''
  and ``$\forall v (E=0)$'' is equivalent to
  ``$\mbox{not}(\exists v \mbox{ not}(E=0))$''.
\end{itemize}

Thus, ($*$) itself can be written as an equation $E=0$ using only symbols
from $A$. Fix such an $E$.
For every $n\in\mathbb N$, let $\overline{n}$ be the string of $n$'s
decimal digits (for example if $n=311$ then $\overline{n}$ is the length-$3$
string ``311''). For every $n\in\mathbb N$, let $E(\overline n)=0$ be the equation
obtained by replacing all unquantified occurrences of $n$ in $E=0$
by $\overline{n}$. Define $f:\mathbb N\to\mathbb N$ so that
for all $n\in\mathbb N$, $f(n)=\ulcorner E(\overline n)=0\urcorner$.

By construction, $f(n)$ outputs (the code of) the equation $E(\overline n)=0$
whose graph is the set of all points $(x,y)$ satisfying ($*$),
i.e., the set of all points in $\mathrm{Gl}(\tau)$ where
$\ulcorner\tau\urcorner=\varphi_n(0)$ if such a $\tau$ exists.

Thus, $f$ witnesses that $\mathcal A$ is self-constrained.
By Theorem \ref{maintheorem}, there is a self-graphing equation in $\mathcal A$.

In some sense, the crucial key in this example is that the infinite product
allows for the expression of the unbounded logical quantifier $\exists$.
Together with the propositional logical connectives (AND, OR, NOT),
unbounded quantification enables expression of anything that can be expressed
in first-order logic.

\setbiblabelwidth{10}
\bibliographystyle{plain}
\bibliography{Alexander_ACMS2024}

\end{document}